\theoremstyle{plain}
\newtheorem{theorem}{Theorem}[section]
\newtheorem{lemma}{Lemma}[section]
\newtheorem{definition}{Definition}[section]
\newtheorem{proposition}{Proposition}[section]
\newtheorem{remark}{Remark}[section]
\newcommand{\beq}{\begin{equation}}
\newcommand{\eeq}{\end{equation}}
\newcommand{\beqa}{\begin{eqnarray}}
\newcommand{\eeqa}{\end{eqnarray}}
\newcommand{\beqas}{\begin{eqnarray*}}
\newcommand{\eeqas}{\end{eqnarray*}}
\renewcommand*{\@biblabel}[1]{\hfill#1.}
\begin{document}

\title{An interior subgradient and a proximal linearized method for DC programming}

\author{
Valdin\^es Leite de Sousa J\'unior\thanks{Universidade Federal do Cariri, Juazeiro do Norte1, CE, BR  (Email: {\tt valdines.leite@ufca.edu.br}).}
}
\date{}
\maketitle
\centerline{\today}
\vspace{.2cm}

\noindent

\begin{abstract}
In this paper, we study the convergence of an interior subgradient and proximal methods for a $DC$ (difference of convex functions) constrained minimization problem.
%\keywords{DC programming \and Proximal distance \and Nonconvex optimization \and Proximal methods\and Subgradient methods}
% \PACS{PACS code1 \and PACS code2 \and more}
% \subclass{MSC code1 \and MSC code2 \and more}
\end{abstract}

\section{Introduction}\label{intro}
In the past decades, the interest on $DC$ programming has been increasing considerably. Since then, it has become a very promising research field with several developments in many research lines, theoretical and application, see for instance \cite{Hiriart1989,Hiriart1985,Hiriart1988,Elhilali1996}. Recently, some authors have been proposed some algorithms and numerical experiments do study $DC$ optimization problems in a lot of settings, even in Riemann Manifolds, see   \cite{CruzNeto2018,JCO2016,Sun2003,Moudafi2006,JC2015,JCX2018,JCX2020,JCXY2020}.

The problem of finding the critical points of a function is a very common problem in Optimization. In general, the desirable result is to find a zero of the subdifferential of that function. The main goal of this paper is to study this kind of problem for a special class of a nonconvex function, namely $DC$ (difference of convex functions) functions. To do so, we will analyze the convergence of two different algorithms. The first one was based on the interior gradient methods presented by \cite{Auslander2006}, but in our case, the method is applied for a $DC$ function instead of a convex function. The second one was considered in \cite{CruzNeto2018} with a particular choice of the proximal distance as second-order homogeneous proximal distances and Bregman distances. In our case, we considered the same method as in \cite{CruzNeto2018} but we are going to use a different approach with respect to the objective function and the choice of a kind of type proximal distance. In both cases, we prove that every accumulation point of its generated sequences, if any, is a critical point of a $DC$ function over a nonempty, closed and convex set. Furthermore, with some additional assumptions, the whole sequence converges to a critical point of a $DC$ function. 

The organization of the paper is as follows. In Section~\ref{sec1}, some notations and basic results used throughout the paper are presented. In Section~\ref{sec12}, the two algorithms studied in this paper are presented and the main results are stated and proved. Some final remarks are made in Section~\ref{sec:conclusion}.

\section{Preliminary}\label{sec1}
 
 In this section, we present several concepts of non-smooth analysis that will be useful throughout this presentation.
 
 The subdifferential of a convex  lower semicontinuous function $f:\mathbb{R}^n\to\mathbb{R}$ at $x$, is defined
by $$\partial f(x)=\{v\in\mathbb{R}^n:\forall y\in\mathbb{R}^n, f(y)\geq f(x)+\langle v,y-x\rangle\}.$$ 
If $f$ is strongly convex with modulus $\rho>0$, it is well known that, for all $v\in\partial f(x)$,
$$
f(y)\geq f(x)+\langle v,y-x\rangle+\frac{\rho}{2}\|y-x\|^2,\quad \forall y\in\mathbb{R}^n,
$$ 
and its subdifferential $\partial f$ is strongly monotone with modulus $\rho$, i. e., for any $x,y\in\mathbb{R}^n, v\in\partial f(y)$ and $u\in\partial f(x)$, we have
$$
\langle v-u,y-x\rangle\geq\rho\|y-x\|^2.
$$
 
Let $f:\mathbb{R}^n\rightarrow\mathbb{R}$ be a locally Lipschitz function at $\bar x\in \mathbb{R}^n$ with constant $L>0$ and $v\in\mathbb{R}^n$. The {\it Clarke's directional derivative}
\cite[page 25]{Clarke1990} of $f$ at $\bar x$ in the direction $v$, denoted by $f^{\circ}(\bar x;v)$, is defined as 
$$
f^{\circ}(\bar x;v):=\displaystyle\limsup_{t\downarrow0~y \rightarrow \bar x}\frac{f(y+tv)-f(y)}{t}, 
$$
and {\it Clarke's subdifferential} \cite[page 27]{Clarke1990} of $f$ at $\bar x$, denoted by $\partial^{\circ}f(\bar x)$, is defined as
$$\partial^{\circ}f(\bar x):=\left\{w\in\mathbb{R}^n~:~f^{\circ}(\bar x;v)\geq\langle w,v\rangle,~ ~v\in\mathbb{R}^n\right\}.$$ If $f$ is convex, the Clarke's subdifferential coincides with the classical subdifferential $\partial f$.
 
 A lower semicontinuous function $f:\mathbb{R}^n\to\mathbb{R}$, is called a $DC$ \textit{function} when there exist convex functions $g$ and $h$ such that,
\begin{equation}\label{DCfunction}
f(x)=g(x)-h(x), \forall x\in\mathbb{R}^n.
\end{equation}
The functions $g$ and $h$ are commonly called \textit{components functions of} $f$. It is well known that a necessary condition for $x\in\mathbb{R}^n$ to be a local
minimizer of a $DC$ function $f$ is $\partial h(x)\subset\partial g(x)$. In general, this condition is hard to be reached, often such condition is replaced by a relaxed one, namely points that satisfies $\partial g(x)\cap\partial h(x)\neq\emptyset$. Inspired by this condition and for the other definitions of critical points for constrained problems, we have the following definition. 

\begin{definition}\label{defi1}
Let $D$ be a closed and convex set and  $f:\mathbb{R}^n\to\mathbb{R}$ be a $DC$ function as in \eqref{DCfunction}. We say that a point $x^*\in D$ is a \textit{critical} point of $f$ in $D$
if, there exist $v\in\partial g(x^*)$ and $u\in\partial h(x^*)$ such that 
$$\langle v-u, y-x^*\rangle\geq0,\quad \forall y\in D.$$ We denote by $\mathcal{S}_{D}^*(f)$, the set of the critical points of $f$ in $D$.
\end{definition}

In terms of Clarke's directional derivative, the previous definition can be interpreted as follows.

\begin{definition}\label{defi2}
Let $D$ be a closed and convex set and  $f:\mathbb{R}^n\to\mathbb{R}$ be a $DC$ function as in \eqref{DCfunction}. We say that a point $x^*\in D$ is a \textit{Clarke-critical} point of $f$ in $D$
if,  
$$f^{\circ}(x^*; y-x^*)\geq0,\quad \forall y\in D.$$ We denote by $\mathcal{S}_{D}^{\circ}(f)$, the set of the Clarke-critical points of $f$ in $D$.
\end{definition}

\begin{remark} Note that if $D=\mathbb{R}^n$ in the Definition \ref{defi1}, $\partial g(x^*)\cap\partial h(x^*)\neq\emptyset$. Furthermore, if $h$ is continuously differentiable, $\mathcal{S}_{D}^*(f)\subset\mathcal{S}_{D}^{\circ}(f)$, thanks to \cite[Corollary 1, page 39]{Clarke1990}.

\end{remark}
 
 In our approach, we choose a proximal distance $d : \mathbb{R}^n\times\mathbb{R}^n\to\mathbb{R}_+\cup\{+\infty\}$ as the regularization term. Such a well-known distance 
allows us to analyze the convergence of the algorithm under various settings. Following \cite{Auslander2006}, let us recall the definition of the proximal and induced proximal 
distances.
\begin{definition}\label{proximaldistance}
	A function $d : \mathbb{R}^n\times\mathbb{R}^n\to\mathbb{R}_+\cup\{+\infty\}$ is called a proximal
	distance with respect to an open nonempty convex set $C\subset\mathbb{R}^n$ if for each $y\in C$ it
	satisfies the following properties:
	\begin{description}
		\item [(\textbf{d1})] $d(\cdot, y)$ is proper, lsc, convex, and $C^1$ on $C$;
		\item [(\textbf{d2})] $\emph{dom}~d(\cdot, y)\subset\bar{C}$ and $\emph{dom}~\partial_1d(\cdot, y)={C}$, where $\partial_1d(\cdot, y)$ denotes the subgradient
		map of the function $d(\cdot, y)$ with respect to the first variable;
		\item [(\textbf{d3})] $d(\cdot, y)$ is level bounded on $\mathbb{R}^n$, i.e., $\lim_{\|u\|\to+\infty} d(u, y)=+\infty$;
		\item [(\textbf{d4})] $d(y, y) = 0$.
	\end{description}
\end{definition}

For each $y\in C$, let $\nabla_1d(\cdot,y)$ denote the gradient map of the function $d(\cdot, y)$ with respect to the first variable. Note that by definition $d(\cdot,\cdot)\geq0$, and from \textbf{(d3)} the global minimum of $d(\cdot,y))$ is obtained at $y$, which shows that $\nabla_1d(y,y)=0$. We denote by $\mathcal{D}(C)$ the family of functions $d$ satisfying \textbf{(d1)-(d4)}.
	
	Next, following the approach presented in~\cite{Auslander2006}, we associate to a given $d\in \mathcal{D}(C)$ a corresponding induced distance $H$ that satisfies some desirable properties.

\begin{definition}\label{proximaldistance21}
	Given $C\subset\mathbb{R}^n$, open and convex, and $d\in \mathcal{D}(C)$, a function $H:\mathbb{R}^n\times\mathbb{R}^n\to\mathbb{R}_+\cup\{+\infty\}$ is called the induced proximal distance to $d$ if $H$ is finite
	valued on $C\times C$ and for each $x, y\in C$ satisfies the following properties:
	\begin{description}
		\item[(\textbf{H1})] $H(x,x)=0$;
		\item[(\textbf{H2})] $\left\langle z-y,\nabla_1 d(y,x)\right\rangle\leq H(z,x)-H(z,y)$, $ z\in C$.
	\end{description}
\end{definition}

We write $(d,H)\in \Phi(C)$ to quantify the triple $[C, d,H]$ that satisfies the premises of Definition~\ref{proximaldistance21}. Similarly, we write $(d,H)\in \Phi(\bar{C})$ 
for the triple $[\bar{C}, d,H]$ whenever there exists $H$, which is finite valued on $\bar{C}\times C$, satisfies \textbf{(H1)}-\textbf{(H2)} for any $z\in C$, and is such that
$ z\in \bar{C}$ has $H(z,\cdot)$ level bounded on $C$. Clearly, one has $\Phi(\bar{C})\subset \Phi(C)$.
For examples and a thorough discussion about proximal and induced proximal distances see, for instance, 
\cite{Auslander2006,BurachikSIAM2010}. 

Before we introduce the main results of the present paper, we recall the following well-known results of nonnegative sequences.
\begin{lemma}[see \cite{Polyak1987}]\label{lemmaone} Let $\{u^k\}$, $\{\alpha_k\}$, and $\{\beta_k\}$ be nonnegative sequences of real numbers satisfying $u^{k+1}\leq(1+\alpha_k)u^k+\beta_k$ such that
	$\sum_k\alpha_k<\infty$ and $\sum_k\beta_k<\infty$. Then, the sequence $\{u^k\}$ converges.
\end{lemma}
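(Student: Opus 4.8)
The plan is to reduce the given \emph{multiplicative} recursion to a purely \emph{additive} one by a suitable rescaling, and then invoke the elementary fact that a nonincreasing sequence bounded below converges.

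First I would observe that, since $\alpha_k\ge 0$ and $\sum_k\alpha_k<\infty$, the partial products $a_k:=\prod_{j=1}^{k-1}(1+\alpha_j)$ (with the convention $a_1=1$) form a nondecreasing sequence bounded above by $\exp\!\big(\sum_{j\ge1}\alpha_j\big)<\infty$, because $1+t\le e^{t}$ for $t\ge0$. Hence $a_k$ converges to a finite limit $P\ge1$, and in particular $1\le a_k\le P$ for every $k$.

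Next I would set $w^k:=u^k/a_k\ge0$. Dividing the hypothesis $u^{k+1}\le(1+\alpha_k)u^k+\beta_k$ by $a_{k+1}=a_k(1+\alpha_k)$ yields
\[
w^{k+1}\le w^k+\frac{\beta_k}{a_{k+1}}\le w^k+\beta_k=:w^k+\gamma_k ,
\]
where $\gamma_k:=\beta_k/a_{k+1}\ge0$ satisfies $\sum_k\gamma_k\le\sum_k\beta_k<\infty$. Now define $t_k:=w^k-\sum_{j=1}^{k-1}\gamma_j$. From the last display $t_{k+1}\le t_k$, so $\{t_k\}$ is nonincreasing; and since $w^k\ge0$ we have $t_k\ge-\sum_{j\ge1}\gamma_j>-\infty$, so $\{t_k\}$ is bounded below. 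A nonincreasing sequence bounded below converges, say $t_k\to t^\star$; consequently $w^k=t_k+\sum_{j=1}^{k-1}\gamma_j\to t^\star+\sum_{j\ge1}\gamma_j$. Finally $u^k=a_k w^k$ is the product of two convergent sequences, hence converges.

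As for difficulty, there is essentially no hard step here, this being a classical lemma; the only points requiring a little care are that the rescaling factors $a_k$ stay bounded \emph{and} bounded away from $0$ (here trivially $a_k\ge1$), which is what makes the passage from convergence of $w^k$ to convergence of $u^k$ legitimate, and that the summability $\sum_k\beta_k<\infty$ is used to control the additive error $\sum_k\gamma_k$ accumulated in $t_k$.
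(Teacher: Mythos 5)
Your proof is correct. The paper itself gives no proof of this lemma --- it is stated with the citation ``see Polyak (1987)'' and used as a known fact --- so there is nothing internal to compare against; your rescaling by the partial products $a_k=\prod_{j<k}(1+\alpha_j)$, which are bounded between $1$ and $\exp\bigl(\sum_j\alpha_j\bigr)$, followed by the monotone-sequence argument for $t_k=w^k-\sum_{j<k}\gamma_j$, is exactly the standard textbook derivation of this quasi-Fej\'er convergence result, and every step (in particular the bound $\gamma_k\le\beta_k$ from $a_{k+1}\ge1$) checks out.
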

\begin{lemma}[see \cite{Polyak1987}]\label{lemmasec}
	Let $\{\lambda_k\}$ be a sequence of positive numbers, $\{a_k\}$ a sequence of real numbers, and $b_n:=\sigma^{-1}_n\sum_{k=1}^n\lambda_ka_k$, 
	where $\sigma_n:=\sum_{k=1}^n\lambda_k$. If $\sigma_n\to\infty$, $\liminf a_n\leq\liminf b_n\leq\limsup b_n\leq\limsup a_n$.
\end{lemma}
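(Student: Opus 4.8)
The plan is to collapse the stated chain of inequalities to a single nontrivial one. The middle inequality $\liminf_n b_n\le\limsup_n b_n$ is automatic for any real sequence, and the two outer inequalities are equivalent under the substitution $a_k\mapsto -a_k$, which sends $b_n\mapsto -b_n$: once I prove $\limsup_n b_n\le\limsup_n a_n$ for an arbitrary choice of the $a_k$'s, applying it to $(-a_k)$ gives $-\liminf_n b_n\le-\liminf_n a_n$, i.e.\ $\liminf_n a_n\le\liminf_n b_n$. Note also that $\lambda_k>0$ forces $\sigma_n>0$, so every $b_n$ is well defined. Hence I would reduce to proving only $\limsup_n b_n\le\limsup_n a_n$.

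Set $L:=\limsup_n a_n$. If $L=+\infty$ there is nothing to prove, so assume $L<+\infty$ and fix $\varepsilon>0$. By definition of $\limsup$ there is an index $N$ with $a_k\le L+\varepsilon$ for all $k\ge N+1$. Splitting the sum defining $b_n$ at $N$ and using $\lambda_k>0$ on the tail terms, I get, for $n>N$,
\[
b_n=\frac{1}{\sigma_n}\sum_{k=1}^{N}\lambda_k a_k+\frac{1}{\sigma_n}\sum_{k=N+1}^{n}\lambda_k a_k\le\frac{S_N}{\sigma_n}+(L+\varepsilon)\,\frac{\sigma_n-\sigma_N}{\sigma_n},
\]
where $S_N:=\sum_{k=1}^{N}\lambda_k a_k$ is a fixed real number and $\sum_{k=N+1}^{n}\lambda_k=\sigma_n-\sigma_N$. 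Letting $n\to\infty$ and using the hypothesis $\sigma_n\to+\infty$, one has $S_N/\sigma_n\to0$ and $(\sigma_n-\sigma_N)/\sigma_n=1-\sigma_N/\sigma_n\to1$, so $\limsup_n b_n\le L+\varepsilon$. Since $\varepsilon>0$ is arbitrary, $\limsup_n b_n\le L=\limsup_n a_n$.

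This is a Stolz--Ces\`aro-type estimate, and no step presents a genuine obstacle; the only point requiring care is that the hypothesis $\sigma_n\to\infty$ is invoked in two distinct ways — to make the contribution $S_N/\sigma_n$ of the fixed ``head'' of the weighted sum vanish, and to force the total weight $(\sigma_n-\sigma_N)/\sigma_n$ of the ``tail'' to tend to $1$, so that the tail bound $L+\varepsilon$ is transmitted to $\limsup_n b_n$. One should also dispatch the degenerate cases directly: $L=+\infty$ is trivial above, while $\liminf_n a_n=-\infty$ makes the first inequality vacuous (equivalently, it is covered by the sign-flip reduction).
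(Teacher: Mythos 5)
The paper does not prove this lemma at all: it is quoted as a known result with a pointer to Polyak's book, so there is no in-paper argument to compare against. Your proof is correct and is the standard weighted Stolz--Ces\`aro argument: the reduction of the three inequalities to the single bound $\limsup_n b_n\le\limsup_n a_n$ via the sign flip $a_k\mapsto -a_k$ is valid (it uses only $\limsup(-c_n)=-\liminf c_n$ and the linearity of $b_n$ in the $a_k$), the head/tail split is the right decomposition, and both uses of $\sigma_n\to\infty$ are correctly identified. The only loose end is the case $\limsup_n a_n=-\infty$, which your phrase ``assume $L<+\infty$ and fix $\varepsilon>0$'' nominally covers but for which the bound $a_k\le L+\varepsilon$ is not meaningful; it is dispatched by the identical computation with an arbitrary real $M$ in place of $L+\varepsilon$, giving $\limsup_n b_n\le M$ for every $M$. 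With that one-line remark added, the proof is complete and self-contained, which is arguably preferable to the paper's bare citation.
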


\section{On the algorithms and convergence analysis}\label{sec12}

Let $C\subset\mathbb{R}^n$ be an open nonempty convex set. From now on,  $f:\mathbb{R}^n\to\mathbb{R}$ is a lower semicontinuous bounded below $DC$ function and $g, h : \mathbb{R}^n\to\mathbb{R}$ are lower semicontinuous and convex functions such that $f(x)=g(x)-h(x)$. In addition, in all further results, assume that $(d,H)\in\Phi_{+}(C)$. 
 
 To solve the problem of finding a critical point of $f$ on $\bar C$, we will study the following algorithms:

\textbf{Algorithm 1:}\\
 Let $\lambda_k>0$, $k\in\mathbb{N}$. Start from a point $x^0\in C$ and generates a sequence $\{x^k\}\subset C$ satisfying
$$
v^k\in \partial g(x^{k}),\quad w^k\in\partial h(x^k),
$$
$$
x^{k+1}\in\mbox{argmin}\left\{\lambda_k\langle v^k-w^k,z\rangle+d(z,x^k)~|~z\in C\right\}.
$$ 

\textbf{Algorithm 2:}\\
 Let $\lambda_k>0$, $k\in\mathbb{N}$. Take a inicial point $x^0\in C$ and generates a sequence $\{x^k\}\subset C$ satisfying
$$
\quad w^k\in\partial h(x^k),
$$
$$
x^{k+1}\in\mbox{argmin}\left\{g(x)-\langle w^k,z-x^k\rangle+(1/\lambda_k)d(z,x^k)~|~z\in C\right\}.
$$
\begin{remark}
The existence of $\{x^k\}\subset C$ of both Algorithm 1 and 2 is guaranteed by using similar arguments as in the proof of \cite[Proposition 2.1]{Auslander2006} and \cite[Proposition 2.3]{CruzNeto2018}. Then, from optimality conditions, we obtain
\begin{equation}\label{france11}
\lambda_k(v^k-w^k)+\nabla_1d(x^{k+1},x^k)=0,\quad k\in\mathbb{N},
\end{equation} 
for Algorithm 1, and for Algorithm 2, there exists $z^{k+1}\in\partial g(x^{k+1})$, such that
\begin{equation}\label{france22}
\lambda_k(z^{k+1}-w^k)+\nabla_1d(x^{k+1},x^k)=0,\quad k\in\mathbb{N}.
\end{equation} 
\end{remark} 

Next we present an important result to our convergence analysis.
\begin{proposition}\label{prop1} Set 
$\beta_k:=\langle \nabla_1d(x^{k+1},x^k),x^{k+1}-x^{k}\rangle$ and assume that $h$ is strongly convex with modulus $\rho$. Then the following hold:
\begin{description}
\item [\rm(i)] For all $k\in\mathbb{N}$, 
$\beta_k\geq0$. Furthermore, assume that there exists a positive constant $\kappa$ satisfying
\begin{equation}\label{klipschitz}
\partial g(x)\subset\partial g(y)+\|x-y\|\mathbb{B}\quad\forall x,y\in C,
\end{equation}
where $\mathbb{B}$ denotes the closed unit ball in $\mathbb{R}^n$. Then,
\begin{equation}\label{france}
f(x^k)-f(x^{k+1})\geq(\rho-\kappa)\left\|x^k-x^{k+1}\right\|^2+\frac{\beta_k}{\lambda_k},\quad k\in\mathbb{N},
\end{equation}
for Algorithm 1, and 
\begin{equation}\label{france1}
f(x^k)-f(x^{k+1})\geq\rho\left\|x^k-x^{k+1}\right\|^2+\frac{\beta_k}{\lambda_k},\quad k\in\mathbb{N},
\end{equation}
for Algorithm 2.
\item [\rm(ii)] Assume that $\lambda_k\leq\lambda^+$, $k\in\mathbb{N}$. Then, $\sum_k\|x^k-x^{k+1}\|^2<\infty$, $\sum_k\beta_k<\infty$, for Algorithm 2. Besides, if $\rho>\kappa$ , $\sum_k\beta_k<\infty$, for Algorithm 1.
\end{description}

\end{proposition}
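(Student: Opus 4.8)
The plan is to read the descent inequalities off from the optimality conditions \eqref{france11}--\eqref{france22}, using the $DC$ splitting of $f$ together with convexity of $g$ and strong convexity of $h$, and then to feed them into the familiar ``bounded below plus summable decrease'' argument for part (ii).

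For (i), the nonnegativity of $\beta_k$ needs no extra hypothesis: $d(\cdot,x^k)$ is convex and $C^1$ on $C$ with $\nabla_1 d(x^k,x^k)=0$, so monotonicity of the gradient of $d(\cdot,x^k)$ gives $\langle\nabla_1 d(x^{k+1},x^k)-\nabla_1 d(x^k,x^k),\,x^{k+1}-x^k\rangle\ge 0$, that is, $\beta_k\ge 0$; in fact the subgradient inequality for $d(\cdot,x^k)$ at $x^{k+1}$ evaluated at the point $x^k$ gives the sharper $\beta_k\ge d(x^{k+1},x^k)$, which is convenient later. Next I would record the identity obtained by pairing \eqref{france11} (resp.\ \eqref{france22}) with $x^{k+1}-x^k$: since $\nabla_1 d(x^{k+1},x^k)=-\lambda_k(v^k-w^k)$ for Algorithm 1 (and with $z^{k+1}$ in place of $v^k$ for Algorithm 2), one gets $\langle v^k-w^k,\,x^k-x^{k+1}\rangle=\beta_k/\lambda_k$ and $\langle z^{k+1}-w^k,\,x^k-x^{k+1}\rangle=\beta_k/\lambda_k$ respectively.

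Now I would estimate $f(x^k)-f(x^{k+1})=[g(x^k)-g(x^{k+1})]+[h(x^{k+1})-h(x^k)]$. For Algorithm 2 the second bracket is bounded below, via strong convexity of $h$ at $x^k$ with $w^k\in\partial h(x^k)$, by $\langle w^k,x^{k+1}-x^k\rangle+\tfrac{\rho}{2}\|x^{k+1}-x^k\|^2$, while the first bracket is bounded below directly, by convexity of $g$ at $x^{k+1}$ with the subgradient $z^{k+1}\in\partial g(x^{k+1})$ from \eqref{france22}, by $\langle z^{k+1},x^k-x^{k+1}\rangle$; adding these and substituting the identity above for $\langle z^{k+1}-w^k,x^k-x^{k+1}\rangle$ gives \eqref{france1}. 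For Algorithm 1 the new difficulty is that \eqref{france11} only provides a subgradient $v^k\in\partial g(x^k)$, not one at $x^{k+1}$; here I would invoke the Lipschitz-type hypothesis \eqref{klipschitz} to pick $u\in\partial g(x^{k+1})$ with $\|v^k-u\|\le\kappa\|x^k-x^{k+1}\|$, so that convexity of $g$ at $x^{k+1}$ yields $g(x^k)-g(x^{k+1})\ge\langle u,x^k-x^{k+1}\rangle\ge\langle v^k,x^k-x^{k+1}\rangle-\kappa\|x^k-x^{k+1}\|^2$. Combining this with the same strong-convexity bound for $h$ and the identity $\langle v^k-w^k,x^k-x^{k+1}\rangle=\beta_k/\lambda_k$ then gives \eqref{france}, the coefficient of $\|x^k-x^{k+1}\|^2$ being what remains of the strong-convexity modulus after absorbing the $\kappa$-loss. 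This transfer of $v^k$ across $x^{k+1}$ through \eqref{klipschitz}, together with the careful tracking of the resulting constant, is the step I expect to be the main obstacle.

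For (ii), under $\rho>0$ (Algorithm 2) resp.\ $\rho>\kappa$ (Algorithm 1) every term on the right-hand side of \eqref{france1} resp.\ \eqref{france} is nonnegative, so $\{f(x^k)\}$ is nonincreasing; since $f$ is bounded below it converges, and telescoping the descent inequality over $k$ shows $\sum_k\bigl(f(x^k)-f(x^{k+1})\bigr)<\infty$. Peeling off the two nonnegative summands gives $\sum_k\|x^k-x^{k+1}\|^2<\infty$ together with $\sum_k\beta_k/\lambda_k<\infty$ for Algorithm 2 (and $\sum_k\beta_k/\lambda_k<\infty$ for Algorithm 1 when $\rho>\kappa$); finally, $\lambda_k\le\lambda^+$ gives $\beta_k\le\lambda^+\,(\beta_k/\lambda_k)$, whence $\sum_k\beta_k\le\lambda^+\sum_k\beta_k/\lambda_k<\infty$ in both cases.
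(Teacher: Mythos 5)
Your proposal is correct and follows essentially the same route as the paper: the same use of the optimality conditions \eqref{france11}--\eqref{france22}, the same transfer of $v^k$ to a subgradient of $g$ at $x^{k+1}$ via \eqref{klipschitz}, the same strong-convexity bound on $h$, and the same telescoping argument for (ii); the only variation is that you obtain $\beta_k\ge 0$ from monotonicity of $\nabla_1 d(\cdot,x^k)$ together with $\nabla_1 d(x^k,x^k)=0$, whereas the paper uses \textbf{(H2)} and $H\ge 0$, both equally valid. One small point: you keep the factor $\tfrac{\rho}{2}$ in the strong-convexity inequality (consistent with the paper's preliminaries), which would produce the constants $\rho/2-\kappa$ and $\rho/2$ in \eqref{france} and \eqref{france1}; the paper's own proof silently drops the $1/2$, so this discrepancy is an internal inconsistency of the paper rather than a gap in your argument.
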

\begin{proof} Let us prove (i). From \textbf{(H2)},  with $z=x=x^k$, $ y=x^{k+1}$, and taking into account that $H(x^k,x^k)=0$, we obtain
	\begin{equation}\label{fadas23}
	H(x^k,x^{k+1})\leq\langle x^{k+1}-x^k,\nabla_1 d(x^{k+1},x^k)\rangle,\quad k\in\mathbb{N}.
	\end{equation}
	Since $H(x^k,x^{k+1})\geq0$, we have that $\beta_k\geq0$, $k\in\mathbb{N}$. Now, let us prove \eqref{france}. First, in view of \eqref{france11}, we have
\begin{equation}\label{paris}
\lambda_k(v^k-w^k)+\nabla_1 d(x^{k+1},x^k)=0,\quad k\in\mathbb{N}.
\end{equation}
Since $x^k\in C$ for all $k\geq0$, we can use \eqref{klipschitz}, to obtain
$$
\partial g(x^k)\subset\partial g(x^{k+1})+\kappa\left\|x^k-x^{k+1}\right\|\mathbb{B},\quad k\in\mathbb{N}.
$$
Taking into account that $v^k\in \partial g(x^{k})$, last inclusion implies that there exist $u^k\in\partial g(x^{k+1})$ and $b^k\in\mathbb{B}$ satisfying
\begin{equation}\label{london}
v^k=u^k+\kappa\left\|x^k-x^{k+1}\right\|b^k,\quad k\in\mathbb{N}.
\end{equation}
From convexity of $g$,
$$
g(x^k)\geq g(x^{k+1})+\langle u^k,x^k-x^{k+1}\rangle,\quad k\in\mathbb{N}.
$$
Now, combining last inequality with \eqref{london}, we obtain
$$
g(x^k)\geq g(x^{k+1})+\langle v^k,x^k-x^{k+1}\rangle-\kappa\left\|x^k-x^{k+1}\right\|\langle b^k,x^k-x^{k+1}\rangle,\quad k\in\mathbb{N}.
$$
Consequently, from \eqref{paris}, we have
\begin{eqnarray*}
g(x^k)&\geq& g(x^{k+1})-\frac{1}{\lambda_k}\langle \nabla_1 d(x^{k+1},x^k),x^k-x^{k+1}\rangle\\&+&\langle w^k,x^k-x^{k+1}\rangle-\kappa\left\|x^k-x^{k+1}\right\|\langle b^k,x^k-x^{k+1}\rangle,\quad k\in\mathbb{N}.
\end{eqnarray*}
Since $\beta_k=\langle \nabla_1d(x^{k+1},x^k),x^{k+1}-x^{k}\rangle$, we have
\begin{eqnarray*}
g(x^k)&\geq& g(x^{k+1})+\langle w^k,x^k-x^{k+1}\rangle\\
&-&\kappa\left\|x^k-x^{k+1}\right\|\langle b^k,x^k-x^{k+1}\rangle+\frac{\beta_k}{\lambda_k},\quad k\in\mathbb{N}.
\end{eqnarray*}
On the other hand, as $h$ is strongly convex with modulus $\rho>0$, we have
\begin{equation}\label{hineq}
h(x^{k+1})\geq h(x^k)+\langle w^k,x^{k+1}-x^k\rangle+\rho\left\|x^k-x^{k+1}\right\|^2,\quad k\in\mathbb{N}.
\end{equation}
Then, we obtain
\begin{eqnarray*}
g(x^k)&\geq& g(x^{k+1})+h(x^k)-h(x^{k+1})-\kappa\left\|x^k-x^{k+1}\right\|\langle b^k,x^k-x^{k+1}\rangle\\
&+&\rho\|x^k-x^{k+1}\|^2+\frac{\beta_k}{\lambda_k},\quad k\in\mathbb{N}.
\end{eqnarray*}
Using Cauchy--Schwartz inequality, we have
$$
g(x^k)-h(x^k)\geq g(x^{k+1})-h(x^{k+1})+(\rho-\kappa)\left\|x^k-x^{k+1}\right\|^2+\frac{\beta_k}{\lambda_k},\quad k\in\mathbb{N}.
$$
Finally, since $f(x)=g(x)-h(x)$ we obtain \eqref{france}. 

To prove \eqref{france1}, we can use the convexity of $g$ combined with \eqref{france22} to obtain,
$$
g(x^k)\geq g(x^{k+1})+\langle w^k,x^k-x^{k+1}\rangle
-\frac{1}{\lambda_k}\langle\nabla_1 d(x^{k+1},x^k),x^k-x^{k+1}\rangle,\quad k\in\mathbb{N}.
$$
Then, use last inequality with \eqref{hineq} to obtain \eqref{france1}.

The item (ii), follows immediately from (i), and from the fact that $f$ is bounded below.

\end{proof}

\begin{theorem}\label{theo1}
Under all the assumptions of Proposition \ref{prop1}, suppose furthermore that $\lambda_k\geq\lambda_->0$, $k\in\mathbb{N}$. If $\{x^k\}$ is generated by Algorithm 1 or Algorithm 2, its accumulation points, if any, are critical points of $f$ in $\bar C$.
\end{theorem}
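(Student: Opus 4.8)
The plan is to fix an accumulation point $\bar x$ of $\{x^k\}$, choose a subsequence $\{x^{k_j}\}$ with $x^{k_j}\to\bar x$, and verify that $\bar x$ satisfies Definition~\ref{defi1} with $D=\bar C$. Since $x^k\in C$ for all $k$, we have $\bar x\in\bar C$ at once. The first step is to extract from Proposition~\ref{prop1} the two facts that drive everything: under the standing hypotheses (and, for Algorithm~1, the additional requirement $\rho>\kappa$, under which \eqref{france} supplies the needed summability), Proposition~\ref{prop1}(i)--(ii) gives $\sum_k\|x^k-x^{k+1}\|^2<\infty$ and $\sum_k\beta_k<\infty$, hence $\|x^k-x^{k+1}\|\to0$ and $\beta_k\to0$. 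In particular $x^{k_j+1}\to\bar x$ as well, which is what will let the subgradients attached to index $k_j+1$ be passed to subgradients ``at $\bar x$''.

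The second step is to pass to convergent subsequences of the multipliers. Since $g$ and $h$ are finite-valued convex functions on $\mathbb{R}^n$, they are locally Lipschitz and $\partial g$, $\partial h$ are locally bounded; as $\{x^{k_j}\}$ and $\{x^{k_j+1}\}$ lie in a compact set, the sequences $\{v^{k_j}\}$, $\{w^{k_j}\}$ (and, for Algorithm~2, $\{z^{k_j+1}\}$ from \eqref{france22}) are bounded. Passing to a further subsequence I may assume $v^{k_j}\to v$, $w^{k_j}\to w$ (Algorithm~1) or $z^{k_j+1}\to\zeta$, $w^{k_j}\to w$ (Algorithm~2). Closedness of the graph of the subdifferential of a finite convex function then gives $v\in\partial g(\bar x)$, $w\in\partial h(\bar x)$ for Algorithm~1, and $\zeta\in\partial g(\bar x)$ (here one uses $x^{k_j+1}\to\bar x$), $w\in\partial h(\bar x)$ for Algorithm~2.

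The main step is to combine the optimality relation with \textbf{(H2)} and let $j\to\infty$. Applying \textbf{(H2)} with second argument $x^k$, first argument $x^{k+1}$, and an arbitrary free point $y\in C$, and substituting $\nabla_1d(x^{k+1},x^k)=-\lambda_k(v^k-w^k)$ from \eqref{france11}, one obtains
\begin{equation}
\langle v^k-w^k,\,y-x^{k+1}\rangle\;\geq\;\frac{1}{\lambda_k}\bigl(H(y,x^{k+1})-H(y,x^k)\bigr),\qquad y\in C,\ k\in\mathbb{N},
\end{equation}
and the same inequality with $z^{k+1}-w^k$ in place of $v^k-w^k$ for Algorithm~2, via \eqref{france22}. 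Evaluating along $k_j$, the left-hand side converges to $\langle v-w,y-\bar x\rangle$ (resp. $\langle\zeta-w,y-\bar x\rangle$); since $\lambda_{k_j}\geq\lambda_->0$ and $x^{k_j},x^{k_j+1}\to\bar x$, the right-hand side tends to $0$ by the continuity of the induced distance $H(y,\cdot)$ built into $(d,H)\in\Phi_+(C)$. Hence $\langle v-w,y-\bar x\rangle\geq0$ for every $y\in C$, and because $C$ is dense in $\bar C$ and $y\mapsto\langle v-w,y-\bar x\rangle$ is continuous, the inequality holds for all $y\in\bar C$. This is precisely the defining property of $\bar x\in\mathcal{S}_{\bar C}^{*}(f)$, with critical-point witnesses $v\in\partial g(\bar x)$ and $w\in\partial h(\bar x)$; the Algorithm~2 case is identical with $\zeta$ replacing $v$.

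The step I expect to be the real obstacle is this last limit passage, namely controlling the induced-distance terms $H(y,x^{k_j})$ and $H(y,x^{k_j+1})$. When $\bar x\in C$ this is just continuity of $H(y,\cdot)$ at $\bar x$; the genuinely delicate situation is $\bar x\in\partial C$, where $\nabla_1d(x^{k+1},x^k)$ need not stay bounded and one must rely on the boundary behaviour of $d$ and $H$ encoded in $\Phi_+(C)$ (for instance a level-boundedness/continuity property of $H$ up to $\bar C$, or the telescoped bound $\sum_{k=1}^{N}\lambda_k\langle v^k-w^k,y-x^{k+1}\rangle\geq-H(y,x^1)$ combined with Lemma~\ref{lemmasec}). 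The remaining ingredients are routine and already in place: $\beta_k\geq0$ and \eqref{fadas23} come from Proposition~\ref{prop1}(i), and the summability producing $\|x^k-x^{k+1}\|\to0$ comes from Proposition~\ref{prop1}(ii).
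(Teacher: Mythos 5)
Your skeleton matches the paper's: extract a subsequence $x^{k_j}\to\bar x$, use Proposition~\ref{prop1}(ii) to get $x^{k_j+1}\to\bar x$ (noting, correctly, that Algorithm~1 needs $\rho>\kappa$ for the summability), pass to convergent subgradient subsequences and use closedness of $\partial g,\partial h$, then combine the optimality relation \eqref{france11}/\eqref{france22} with \textbf{(H2)}. The inequality you display is the right one. The gap is in how you take the limit: you let $j\to\infty$ in $\frac{1}{\lambda_{k_j}}\bigl(H(y,x^{k_j+1})-H(y,x^{k_j})\bigr)$ and claim this tends to $0$ ``by the continuity of the induced distance $H(y,\cdot)$ built into $(d,H)\in\Phi_+(C)$.'' No such continuity is built in: Definition~\ref{proximaldistance21} only asks that $H$ be finite on $C\times C$ and satisfy \textbf{(H1)}--\textbf{(H2)}; continuity-type conditions on $H(y,\cdot)$ appear only later as the extra hypotheses \textbf{(Ha)}--\textbf{(Hb)}, which are not assumed in Theorem~\ref{theo1}. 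So your primary limit passage is unjustified even when $\bar x\in C$, not only on $\partial C$.

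You do name the correct repair, but only parenthetically and without executing it: telescope the inequality over $k=1,\dots,n$, use $H\geq 0$ to drop $-H(y,x^{n+1})$, divide by $\sigma_n=\sum_{k=1}^n\lambda_k\to\infty$, and invoke Lemma~\ref{lemmasec} together with $\sum_k\beta_k<\infty$ to conclude $\limsup_k\langle v^k-w^k,y_0-x^k\rangle\geq 0$. That is exactly the paper's argument, and it needs no continuity of $H$ whatsoever --- only nonnegativity and the summability of $\beta_k$. As written, your proof is incomplete because the step carrying the whole weight rests on a property the framework does not provide; to fix it you must promote the telescoping/Ces\`aro argument from a side remark to the main line. (A separate, smaller point, shared with the paper: after obtaining the $\limsup$ over all $k$ one still has to argue why the limit along the particular subsequence $k_j$ --- which is what equals $\langle \bar v-\bar w,y_0-\bar x\rangle$ --- inherits the sign; your direct subsequential approach would avoid this if its limit step were legitimate.)
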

\begin{proof} Let $\bar x$ be an accumulation point of $\{x^k\}$ and let $\{x^{k_j}\}$ a subsequence of $\{x^k\}$ such that $\lim_{j\to\infty}x^{k_j}=\bar x$. From Algorithms 1 and 2, $v^{k_j}\in \partial g(x^{k_j})$, $ w^{k_j}\in\partial h(x^{k_j})$ and $z^{k_j+1}\in\partial g(x^{k_j+1})$. Now, thanks do Proposition \ref{prop1} (ii), $\lim_{j\to\infty}x^{k_j+1}=\bar x$. Then, we can use \cite[Theorem 9.13]{RockafellarWets1998} and, without loss of generality, we can assume that $\{v^{k_j}\}$, $\{w^{k_j}\}$ and $\{z^{k_j+1}\}$ converge to $\bar v$, $\bar w$ and $\bar z$, respectively.

Now, let us prove the result for Algorithm 1. Consider any $y_0\in \bar{C}$ fixed. Based on \eqref{france11},
	$$
	\langle v^k-w^k,y_0-x^k\rangle=-\frac{1}{\lambda_k}\langle \nabla_1d(x^{k+1},x^k),y_0-x^{k}\rangle,\quad k\in\mathbb{N}.
	$$
	Then, tanking into account that $\beta_k:=\langle \nabla_1d(x^{k+1},x^k),x^{k+1}-x^{k}\rangle$, we have
	\begin{equation}\label{eq:method3453}
	\langle v^k-w^k,y_0-x^k\rangle=-\frac{1}{\lambda_k}\langle \nabla_1d(x^{k+1},x^k),y_0-x^{k+1}\rangle-\frac{\beta_k}{\lambda_k},\quad k\in\mathbb{N}.
	\end{equation}
	From \textbf{(H2)},  with $z=y_0, y=x^{k+1}, x=x^k$, we obtain
	\begin{equation}\label{fadas}
	\langle y_0-x^{k+1},\nabla_1 d(x^{k+1},x^k)\rangle\leq H(y_0,x^k)-H(y_0,x^{k+1}),\quad k\in\mathbb{N}.
	\end{equation} 
	Combining the last inequality with  \eqref{eq:method3453}, for all $k\geq 0$ we obtain
	\begin{eqnarray*}\label{solista}
	H(y_0,x^{k})-H(y_0,x^{k+1})+\beta_k\geq\nonumber
	-\lambda_k \langle v^k-w^k,y_0-x^k\rangle.
	\end{eqnarray*} 
	Summing the last inequality over $k=1,\dots,n$, for all $k\geq0$ we have
	\begin{eqnarray*}
	H(y_0,x^{1})-H(y_0,x^{n+1})+\sum_{k=1}^{n}\beta_k\geq\nonumber
	\sum_{k=1}^{n}\lambda_k\left(-\langle v^k-w^k,y_0-x^k\rangle\right).
	\end{eqnarray*}
	Since $H(\cdot,\cdot)\geq0$, for all $k\geq0$ we obtain 
	\begin{eqnarray*}
	\sigma_n^{-1}H(y_0,x^{1})+\sigma_n^{-1}\sum_{k=1}^{n}\beta_k\geq\nonumber	\sigma_n^{-1}\sum_{k=1}^{n}\lambda_k\left(-\langle v^k-w^k,y_0-x^k\rangle\right),
	\end{eqnarray*}
	where $\sigma_n:=\sum_{k=1}^{n}\lambda_k$. As $\lambda_k\geq\lambda_-$, then $\sigma_n\to\infty$, and considering that $\sum_{k=1}^{\infty}\epsilon_k<\infty$, we can use  Lemma \ref{lemmasec} to obtain 
	$$
	\limsup_{k\to+\infty}\langle v^k-w^k,y_0-x^k\rangle\geq0.
	$$
	Since $\partial g$ and $\partial h$ are closed, we obtain $\bar v\in\partial g(\bar x)$ and  $\bar w\in\partial h(\bar x)$. Thus, last inequality imples that $$\langle \bar v-\bar w,y_0-\bar x\rangle\geq0,$$ for all $y_0\in \bar C$. Therefore $\bar x\in \mathcal{S}_{\bar C}^*(f)$. 

Now, for Algorithm 2, again, consider any $y_0\in \bar{C}$ fixed. Based on \eqref{france22},
	$$
	\langle z^{k+1}-w^k,y_0-x^{k+1}\rangle=-\frac{1}{\lambda_k}\langle \nabla_1d(x^{k+1},x^k),y_0-x^{k+1}\rangle,\quad k\in\mathbb{N}.
$$
Using the same arguments as in the Algorithm 1, we obtain  
	\begin{eqnarray*}
	\sigma_n^{-1}H(y_0,x^{1})\geq\nonumber	\sigma_n^{-1}\sum_{k=1}^{n}\lambda_k\left(-\langle z^{k+1}-w^k,y_0-x^k\rangle\right),
	\end{eqnarray*}
	where $\sigma_n:=\sum_{k=1}^{n}\lambda_k$. The rest of the proof is exactly the same as was done for Algorithm 1. 
\end{proof}

\begin{lemma}\label{lemma3} Under all the assumptions of Proposition \ref{prop1}, suppose furthermore that $g$ is strongly convex with modulus $\gamma>0$, $h$ is continuously differentiable and $\nabla h$ is $L$--Lipschitz continuous on $C$. Consider any $\bar x\in \mathcal{S}_{\bar C}^*(f)$.
Then the following hold:
\begin{description}
\item [\rm(i)] For Algorithm 1, 
\begin{equation}\label{france81}
H(\bar x,x^{k+1})+\lambda_k(\gamma-L)\|x^k-\bar x\|^2\leq H(\bar x,x^k)+\beta_k,\quad k\in\mathbb{N}.
\end{equation}
\item [\rm(ii)] For Algorithm 2,
\begin{equation}\label{france82}
H(\bar x,x^{k+1})+\lambda_k(\gamma-L-1/2)\|x^{k+1}-\bar x\|^2\leq H(\bar x,x^k)+\alpha_k,\quad k\in\mathbb{N},
\end{equation}
with $\alpha_k:=\lambda_k/2\|x^k-x^{k+1}\|^2$.
\end{description}
\end{lemma}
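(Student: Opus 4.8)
The plan is to run, for each algorithm, the same three-step scheme: first invoke the induced-distance inequality \textbf{(H2)} at the triple $z=\bar x$, $y=x^{k+1}$, $x=x^k$; then eliminate $\nabla_1 d(x^{k+1},x^k)$ through the optimality condition \eqref{france11} (resp.\ \eqref{france22}); and finally estimate the resulting inner product using strong monotonicity of $\partial g$, the criticality of $\bar x$, and the Lipschitz/monotonicity properties of $\nabla h$. Throughout, since $h\in C^1$ we have $w^k=\nabla h(x^k)$, and the criticality of $\bar x$ means there is $\bar v\in\partial g(\bar x)$ with $\langle \bar v-\nabla h(\bar x),y-\bar x\rangle\geq 0$ for every $y\in\bar C$; in particular this applies to $y=x^k$ and to $y=x^{k+1}$, both of which lie in $C\subset\bar C$.

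For Algorithm 1, \textbf{(H2)} combined with \eqref{france11}, which gives $\nabla_1 d(x^{k+1},x^k)=-\lambda_k(v^k-w^k)$, yields after rearrangement $H(\bar x,x^{k+1})\leq H(\bar x,x^k)+\lambda_k\langle v^k-w^k,\bar x-x^{k+1}\rangle$. I would then split $\bar x-x^{k+1}=(\bar x-x^k)+(x^k-x^{k+1})$: the term $\lambda_k\langle v^k-w^k,x^k-x^{k+1}\rangle$ equals $\beta_k$ by \eqref{france11} and the definition of $\beta_k$. For the term $\langle v^k-w^k,\bar x-x^k\rangle$, insert $\pm\bar v$ and $\pm\nabla h(\bar x)$: strong monotonicity of $\partial g$ (modulus $\gamma$) gives $\langle v^k-\bar v,\bar x-x^k\rangle\leq-\gamma\|x^k-\bar x\|^2$; the criticality inequality with $y=x^k$ gives $\langle\bar v-\nabla h(\bar x),\bar x-x^k\rangle\leq 0$; and Cauchy--Schwarz with $L$-Lipschitzness of $\nabla h$ gives $\langle\nabla h(\bar x)-\nabla h(x^k),\bar x-x^k\rangle\leq L\|x^k-\bar x\|^2$. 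Summing these three bounds produces $\langle v^k-w^k,\bar x-x^k\rangle\leq-(\gamma-L)\|x^k-\bar x\|^2$, and rearranging gives \eqref{france81}.

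For Algorithm 2, the first two steps --- now using \eqref{france22}, which gives $\nabla_1 d(x^{k+1},x^k)=-\lambda_k(z^{k+1}-w^k)$ with $z^{k+1}\in\partial g(x^{k+1})$ --- yield $H(\bar x,x^{k+1})\leq H(\bar x,x^k)+\lambda_k\langle z^{k+1}-w^k,\bar x-x^{k+1}\rangle$. Here I would decompose $z^{k+1}-w^k=\bigl(z^{k+1}-\nabla h(x^{k+1})\bigr)+\bigl(\nabla h(x^{k+1})-\nabla h(x^k)\bigr)$. The first bracket, paired with $\bar x-x^{k+1}$, is estimated exactly as in Algorithm 1 but anchored at the point $x^{k+1}$ (strong monotonicity of $\partial g$ at $x^{k+1}$ and $\bar x$, criticality with $y=x^{k+1}$, $L$-Lipschitzness of $\nabla h$), giving $\leq-(\gamma-L)\|x^{k+1}-\bar x\|^2$. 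The second bracket is bounded by $L\|x^{k+1}-x^k\|\,\|\bar x-x^{k+1}\|$, and Young's inequality splits this into $\tfrac12\|\bar x-x^{k+1}\|^2$ plus a multiple of $\|x^k-x^{k+1}\|^2$, i.e.\ into a loss of $\tfrac12$ in the coefficient of $\|x^{k+1}-\bar x\|^2$ and the error term $\alpha_k$; collecting terms gives \eqref{france82}.

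The genuinely routine parts are the algebra and the insertions of $\pm\bar v,\pm\nabla h(\cdot)$. The one point needing care is the Algorithm-2 estimate, where the $g$-subgradient is evaluated at $x^{k+1}$ while the $h$-gradient $w^k$ is evaluated at $x^k$: the Lyapunov-type inequality wants every quantity anchored at $x^{k+1}$, so bridging this index gap is unavoidable and is exactly what forces the weaker coefficient $\gamma-L-\tfrac12$ and the extra summable error $\alpha_k=\tfrac{\lambda_k}{2}\|x^k-x^{k+1}\|^2$ (summable by Proposition \ref{prop1}(ii), since $\lambda_k\leq\lambda^+$). One should also check that the criticality inequality is only ever invoked at points of $\bar C$, which holds because every iterate lies in $C\subset\bar C$.
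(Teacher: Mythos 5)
Your proposal is correct and follows essentially the same route as the paper: \textbf{(H2)} at $(z,y,x)=(\bar x,x^{k+1},x^k)$, elimination of $\nabla_1 d(x^{k+1},x^k)$ via the optimality conditions \eqref{france11}/\eqref{france22}, and the three-way insertion of $\bar v$ and $\nabla h(\bar x)$ using strong monotonicity of $\partial g$, the criticality of $\bar x$, and the Lipschitz continuity of $\nabla h$ --- your splitting $\bar x-x^{k+1}=(\bar x-x^k)+(x^k-x^{k+1})$ in Algorithm 1 is just the paper's identity $-\langle\nabla_1 d(x^{k+1},x^k),x^k-\bar x\rangle=\beta_k+\langle\nabla_1 d(x^{k+1},x^k),\bar x-x^{k+1}\rangle$ read in the other direction. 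The one point of divergence is in part (ii): keeping the Lipschitz constant, as you correctly do, Young's inequality produces the error term $(L^2\lambda_k/2)\|x^k-x^{k+1}\|^2$ rather than the stated $\alpha_k=(\lambda_k/2)\|x^k-x^{k+1}\|^2$ (the paper silently drops an $L$ when bounding $\langle x^{k+1}-\bar x,\nabla h(x^k)-\nabla h(x^{k+1})\rangle$), but this is immaterial since the term remains summable, which is all the subsequent convergence theorem requires.
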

\begin{proof}
Take any $\bar x\in \mathcal{S}_{\bar C}^*(f)$ and let $v\in\partial g(\bar x)$ be such that, for all $y\in C$, 
$$
\langle v-\nabla h(\bar x),y-\bar x\rangle\geq0.
$$
Let us prove (i). Since $\{x^k\}\subset C$, we obtain
$\langle v,x^k-\bar x\rangle\geq\langle\nabla h(\bar x),x^k-\bar x\rangle$, $k\in\mathbb{N}$. 
Since $g$ is strongly convex with modulus $\gamma$, we have $\gamma\|x^k-\bar x\|^2\leq\langle x^k-\bar x, v^k-v\rangle$, $k\in\mathbb{N}$. Consequently, $\gamma\|x^k-\bar x\|^2\leq\langle x^k-\bar x, v^k-\nabla h(\bar x)\rangle$, $k\in\mathbb{N}$.
From \eqref{france11},
$$
\gamma\|x^k-\bar x\|^2\leq-\frac{1}{\lambda_k}\langle x^k-\bar x,\nabla_1 d(x^{k+1},x^k)\rangle+\langle x^k-\bar x,\nabla h(x^k)-\nabla h(x)\rangle,\quad k\in\mathbb{N}.
$$
Taking into account that $\beta_k=\langle \nabla_1d(x^{k+1},x^k),x^{k+1}-x^{k}\rangle$, for all $k\in\mathbb{N}$,
$$
\gamma\|x^k-\bar x\|^2\leq \frac{\beta_k}{\lambda_k}-\frac{1}{\lambda_k}\langle x^{k+1}-\bar x,\nabla_1 d(x^{k+1},x^k)\rangle+\langle x^k-\bar x,\nabla h(x^k)-\nabla h(\bar x)\rangle.
$$
As $\nabla h$ is $L$--Lipschitz continuous on $C$ and using Cauchy-Schwarz, we obtain
$$
\gamma\|x^k-\bar x\|^2\leq \frac{\beta_k}{\lambda_k}+\frac{1}{\lambda_k}\langle \bar x-x^{k+1},\nabla_1 d(x^{k+1},x^k)\rangle+L\|x^k-\bar x\|^2,\quad k\in\mathbb{N}.
$$
Again using \textbf{(H2)},  with $z=\bar x, y=x^{k+1}, x=x^k$, we have
\begin{equation}\label{rosa}
\langle \bar x-x^{k+1},\nabla_1 d(x^{k+1},x^k)\rangle\leq H(\bar x,x^k)-H(\bar x,x^{k+1}),\quad k\in\mathbb{N}.
\end{equation}
	Hence, we can combine the last two inequalities to obtain \eqref{france81}.

Now let us prove (ii).
Since $\{x^k\}\subset C$, 
$\langle v,x^{k+1}-\bar x\rangle\geq\langle\nabla h(\bar x),x^{k+1}-\bar x\rangle$, for all $k\in\mathbb{N}$. Now, 
$$\gamma\|x^{k+1}-\bar x\|^2\leq\langle x^{k+1}-\bar x, z^{k+1}-v\rangle,\quad k \in\mathbb{N},$$ thanks to the strongly convexity of $g$. Thus,
$\gamma\|x^{k+1}-\bar x\|^2\leq\langle x^{k+1}-\bar x, z^{k+1}-\nabla h(\bar x)\rangle$, $k\in\mathbb{N}$. Hence, taking into account that \eqref{france22} holds, for all $k\in\mathbb{N}$,
\begin{equation}\label{leque}
\gamma\|x^{k+1}-\bar x\|^2\leq-\frac{1}{\lambda_k}\langle x^{k+1}-\bar x,\nabla_1 d(x^{k+1},x^k)\rangle+\langle x^{k+1}-\bar x,\nabla h(x^k)-\nabla h(\bar x)\rangle.
\end{equation}
On the other hand, for all $k\in\mathbb{N}$,
\begin{eqnarray*}
\langle x^{k+1}-\bar x,\nabla h(x^k)-\nabla h(\bar x)\rangle&=&\langle x^{k+1}-\bar x,\nabla h(x^{k+1})-\nabla h(\bar x)\rangle\\&+&
\langle x^{k+1}-\bar x,\nabla h(x^k)-\nabla h(x^{k+1})\rangle.
\end{eqnarray*}
As $\nabla h$ is $L$--Lipschitz continuous on $C$ and using Cauchy-Schwarz inequality, we obtain
\begin{eqnarray*}
\langle x^{k+1}-\bar x,\nabla h(x^k)-\nabla h(\bar x)\rangle
&\leq& L\|x^{k+1}-\bar x\|^2+\|x^{k+1}-\bar x\|\|x^k-x^{k+1}\|.
\end{eqnarray*}
Now, since $ab\leq1/2(a^2+b^2)$, $a,b\geq0$,
\begin{eqnarray*}
\langle x^{k+1}-\bar x,\nabla h(x^k)-\nabla h(\bar x)\rangle\leq (L+1/2)\|x^{k+1}-\bar x\|^2+1/2\|x^k-x^{k+1}\|^2.
\end{eqnarray*}
Combining last inequality with \eqref{leque}, we obtain
$$
(\gamma-L-1/2)\|x^{k+1}-\bar x\|^2\leq\frac{1}{\lambda_k}\langle \bar x-x^{k+1},\nabla_1 d(x^{k+1},x^k)\rangle+1/2\|x^k-x^{k+1}\|^2
$$
Thus, \eqref{france82} can be obtained combining last inequality with \eqref{rosa}.
\end{proof}

To set the  convergence of  any sequence  generated by Algorithm 1 and  2,  we need to make further assumptions on the induced proximal
distance $H$, which  were also considered in \cite{Auslander2006}. Let $(d,H)\in \Phi_{+}(\bar{C})\subset\Phi(\bar{C})$ be such that the function $H$ satisfies the following
two additional properties:  For  $ y\in \bar{C}$ and $ \{y^k\}\subset C$, 
\begin{description}
	\item [(\textbf{Ha})]  $\lim_{k\to+\infty}y^k=y$, whenever   $ \{y^k\}$  is bounded  and  $\lim_{k\to+\infty}H(y,y^k)=0$;
	\item [(\textbf{Hb})]  $\lim_{k\to+\infty}H(y,y^k)=~0$, whenever $\lim_{k\to+\infty}y^k=y$.
\end{description}

We also make the following assumption:
\begin{equation}\label{lasth}
\mathcal{S}_{\bar C}^*(f)\neq\emptyset.
\end{equation}
	
Under  these  assumptions,  we prove that both Algorithm 1 and 2 converges to a Clarke critical of $f$.

\begin{theorem}
Under all the assumptions of Lemma \ref{lemma3}, suppose furthermore that $0<\lambda_-\leq\lambda_k\leq\lambda^+$, $k\in\mathbb{N}$ and $\gamma>L+1/2$. If $\{x^k\}$ is generated by Algorithm 1 or Algorithm 2, then it converges to a Clarke critical point point of $f$ in $\bar C$.
\end{theorem}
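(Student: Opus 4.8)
The plan is to upgrade the subsequential convergence already available from Theorem~\ref{theo1} to convergence of the whole sequence, by exploiting a quasi-Fej\'er monotonicity of the scalar sequences $\{H(\bar x,x^k)\}$ for $\bar x\in\mathcal{S}_{\bar C}^*(f)$, the asymptotic regularity $\sum_k\|x^k-x^{k+1}\|^2<\infty$ from Proposition~\ref{prop1}(ii), and the regularity properties \textbf{(Ha)}--\textbf{(Hb)} of the induced proximal distance; the final step will be to invoke the remark after Definition~\ref{defi2}, valid because $h\in C^1$, to pass from a critical point to a Clarke-critical point.

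First I would fix, using \eqref{lasth}, some $\bar x\in\mathcal{S}_{\bar C}^*(f)$, and read Lemma~\ref{lemma3}: for Algorithm~1 the inequality \eqref{france81} holds (this uses $\gamma>L$), and for Algorithm~2 the inequality \eqref{france82} holds (this uses $\gamma>L+1/2$). Discarding in each case the nonnegative term carrying the factor $\gamma-L$, resp.\ $\gamma-L-1/2$, both collapse to
\[
H(\bar x,x^{k+1})\le H(\bar x,x^{k})+\varepsilon_k,\qquad k\in\mathbb{N},
\]
with $\varepsilon_k=\beta_k$ for Algorithm~1 and $\varepsilon_k=\alpha_k=(\lambda_k/2)\|x^k-x^{k+1}\|^2$ for Algorithm~2. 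Since $\lambda_k\le\lambda^+$, Proposition~\ref{prop1}(ii) gives $\sum_k\beta_k<\infty$ for Algorithm~1 and $\sum_k\|x^k-x^{k+1}\|^2<\infty$ for Algorithm~2, so in both cases $\sum_k\varepsilon_k<\infty$. Lemma~\ref{lemmaone} then yields that $\{H(\bar x,x^k)\}$ converges, hence is bounded; as $(d,H)\in\Phi_{+}(\bar C)\subset\Phi(\bar C)$, the function $H(\bar x,\cdot)$ is level bounded on $C$, so $\{x^k\}$ is bounded.

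Next I would take an accumulation point $\hat x$ of $\{x^k\}$ (which exists by boundedness), say $x^{k_j}\to\hat x$. By Theorem~\ref{theo1}, $\hat x\in\mathcal{S}_{\bar C}^*(f)$, so the previous paragraph applies with $\bar x=\hat x$ and $\{H(\hat x,x^k)\}$ converges. Along the subsequence, \textbf{(Hb)} yields $H(\hat x,x^{k_j})\to0$, so the limit of $\{H(\hat x,x^k)\}$ is $0$, i.e.\ $H(\hat x,x^k)\to0$; since $\{x^k\}$ is bounded, \textbf{(Ha)} then forces $x^k\to\hat x$. Finally, as $h$ is continuously differentiable, $\mathcal{S}_{\bar C}^*(f)\subset\mathcal{S}_{\bar C}^{\circ}(f)$ by the remark after Definition~\ref{defi2}, so $\hat x$ is a Clarke-critical point of $f$ in $\bar C$ and $\{x^k\}$ converges to it.

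The genuinely delicate point is precisely this passage from subsequences to the full sequence: $\gamma>L+1/2$ is what makes \eqref{france81}--\eqref{france82} honest quasi-Fej\'er inequalities (a nonnegative term may be dropped), and \textbf{(Ha)}--\textbf{(Hb)} are what convert $H(\hat x,x^k)\to0$ into $x^k\to\hat x$; the rest is bookkeeping over Proposition~\ref{prop1}, Lemma~\ref{lemma3} and Theorem~\ref{theo1}. As a shortcut one may even bypass Theorem~\ref{theo1}: summing \eqref{france81} (resp.\ \eqref{france82}) over $k$ and using $H\ge0$, $\lambda_k\ge\lambda_->0$ together with $\gamma-L>0$ (resp.\ $\gamma-L-1/2>0$) gives $\sum_k\|x^k-\bar x\|^2<\infty$ for the fixed $\bar x$ of \eqref{lasth}, which already forces $x^k\to\bar x$ directly, and then the remark after Definition~\ref{defi2} finishes the argument.
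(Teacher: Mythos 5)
Your proposal is correct and follows essentially the same route as the paper: drop the nonnegative $(\gamma-L)$ resp.\ $(\gamma-L-1/2)$ terms in Lemma~\ref{lemma3} to get a quasi-Fej\'er inequality, apply Lemma~\ref{lemmaone} to make $\{H(\bar x,x^k)\}$ convergent, identify an accumulation point as critical via Theorem~\ref{theo1}, and use \textbf{(Ha)}--\textbf{(Hb)} to force convergence of the whole sequence to it. If anything your write-up is more careful than the paper's: you justify boundedness of $\{x^k\}$ via the level-boundedness of $H(\bar x,\cdot)$ (which the paper leaves implicit), you cite \textbf{(Hb)} and \textbf{(Ha)} in the correct order (the paper swaps the two labels), and you explicitly invoke the remark after Definition~\ref{defi2} to pass from a critical point to a Clarke-critical one, a step the paper attributes to Theorem~\ref{theo1} although that theorem only yields criticality in the sense of Definition~\ref{defi1}.
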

\begin{proof} In view of \eqref{lasth}, take any $x\in\mathcal{S}_{\bar C}^*(f)$. As $\gamma>L+1/2$, Lemma \ref{lemma3} implies that,
$H(x,x^{k+1})\leq H(x,x^k)+\beta_k$, and $H(x,x^{k+1})\leq H(x,x^k)+\alpha_k$,  for all $k\in\mathbb{N}$. As $\sum_k\alpha_k<\infty$, and thanks to Proposition \ref{prop1} (ii), in both cases, we can apply Lemma \ref{lemmaone} we conclude  that $\{H(x,x^{k})\}$ converges to some point $\beta(x)$. Let $x^*$ be an accumulation point of $\{x^{k}\}$. From Theorem \ref{theo1}, 
$x^*\in\mathcal{S}_{\bar C}^*(f)$. Based on \textbf{(Ha)}, we obtain 
	$\lim_{\ell\to+\infty}H(x^*,x^{k_{\ell}})=0$.  Considering that  $\{H(x,x^{k})\}$ converges, we conclude that  $\lim_{k\to+\infty}H(x^*,x^{k})=0$. Now,  by \textbf{(Hb)} it follows that $\{x^k\}$ converges to $x^*$.  Therefore, from Theorem~\ref{theo1}, $x^*$ is a Clarke critical point of $f$ in $\bar{C}$, which  proves the theorem.	
	
\end{proof}

\section{Conclusions} \label{sec:conclusion}

In this paper, we present an interior subgradient and a proximal linearized method for $DC$ programming, whose regularization term is a proximal distance. Based on the methods presented in  \cite{Auslander2006,CruzNeto2018}, we prove that any accumulation point of the respective sequences of both methods is a critical point in the sense of Definition \ref{defi1}, where the strong convexity of one of the components of the main function played a vital role in this analysis. It is worth to point out that, for Algorithm 1, it was supposed that, the subdifferencial of one of the component functions, had the locally Lipschitz property on the constrained set. This assumption, in the differentiable setting, is commonly used in gradient-type algorithms. Finally, in the differentiable setting, we prove that the whole sequence of both methods converges to a Clarke-critical point. In future research, we intend to investigate this kind of problem in more general settings as in Riemann Manifolds and Multi-objective Optimization.  We foresee further progress in this topic in the near future.

%%%%%%%%%%%%%%%%%%%%%%%%%%%%%%%%%%%%%%%%%%%%%%%%%%%%%%%%%%%%%%%%%%%%%%%%%%%%%%%%%%%%%%%%%

%%%%%%%%%%%%%%%%%%%%%%%%%%%%%%%%%%%%%%%%%%%%%%%%%%%%%%%%%%%%%%%%%%%%%%%%%%%%%%%%%%%%%%%%%%

\end{document}